\theoremstyle{plain}
\newtheorem{lemma}{Lemma}[section]
\newtheorem{proposition}[lemma]{Proposition}
\newtheorem{theorem}[lemma]{Theorem}
\newtheorem{corollary}[lemma]{Corollary}
\theoremstyle{definition}
\newtheorem{definition}[lemma]{Definition}
\theoremstyle{remark}
\newtheorem{remark}[lemma]{Remark}
\DeclareMathOperator{\divergence}{div}
\DeclareMathOperator{\TV}{TV}
\DeclareMathOperator{\BV}{BV}
\DeclareMathOperator{\BD}{BD}
\DeclareMathOperator{\TGV}{TGV}
\DeclareMathOperator{\BGV}{BGV}
\DeclareMathOperator{\Sym}{Sym}
\DeclareMathOperator{\dom}{dom}
\newcommand{\doublehookrightarrow}%
{\DOTSB\lhook\joinrel\relbar\!\!\!\!\lhook\joinrel\rightarrow}
\newcommand{\longrightharpoonup}%
{\relbar\joinrel\rightharpoonup}
\newcommand{\conditionalcomma}[1]{\ifx#1\empty\else,\fi}
\newcommand{\RR}{\mathrm{I\mspace{-2.5mu}R}}
\newcommand{\NN}{\mathrm{I\mspace{-2.5mu}N}}
\newcommand{\set}[2]{\{{#1} \ \bigl| \ {#2}\}}
\newcommand{\sett}[1]{\{{#1}\}}
\newcommand{\placeholder}{\,\cdot\,}
\newcommand{\grad}{\nabla}
\newcommand{\abs}[1]{{|{#1}|}}
\newcommand{\inprod}{\cdot}
\newcommand{\norm}[2][]{\|{#2}\|_{#1}}
\newcommand{\dd}[1]{\ \mathrm{d}{#1}}
\newcommand{\conv}{\ast}
\newcommand{\transp}{\mathrm{T}}
\newcommand{\wrightarrow}{\rightharpoonup}
\newcommand{\compose}{\circ}
\newcommand{\seq}[1]{\{{#1}\}}
\newcommand{\Cspace}[3][]{\mathcal{C}_{#1}^{#2}({#3})}
\newcommand{\Ccspace}[2]{\mathcal{C}_{\mathrm{c}}^{#1}({#2})}
\newcommand{\lebesgueL}[1]{L^{#1}}
\newcommand{\LPspace}[2]{\lebesgueL{#1}({#2})}
\newcommand{\LPlocspace}[2]{\lebesgueL{#1}_{\mathrm{loc}}({#2})}
\newcommand{\linspace}[3][]{\mathcal{L}^{#1}\bigl({#2},{#3}\bigr)}
\newcommand{\radon}{\mathcal{M}}
\newcommand{\radonspace}[1]{\mathcal{M}({#1})}
\newcommand{\symgrad}{\mathcal{E}}
\newlength{\formulaindentwidth}
\begin{document}\sloppy

\def\sampta{SampTA~}

\title{Inverse problems with 
  second-order Total Generalized Variation constraints}

\name{Kristian Bredies, Tuomo Valkonen 
  \thanks{Supported by the Austrian Science Fund (FWF)
  under grant SFB F32 ``Mathematical Optimization and Applications 
  in Biomedical Sciences''.}}
\address{Institute of Mathematics and Scientific Computing, \\ 
  University of Graz, Graz, Austria \\
Emails: kristian.bredies@uni-graz.at, tuomo.valkonen@iki.fi}

\maketitle

\begin{abstract}
  Total Generalized Variation ($\TGV$) has recently been introduced as
  penalty functional for modelling images with edges as well as smooth 
  variations \cite{bredies2010tgv}. It can be interpreted as a ``sparse''
  penalization of optimal balancing from the first up to the $k$-th 
  distributional derivative and leads to desirable results when 
  applied to image denoising, i.e., $L^2$-fitting with $\TGV$ penalty.
  The present paper studies $\TGV$ of second order in the context
  of solving ill-posed linear inverse problems. Existence and stability
  for solutions of Tikhonov-functional minimization with respect
  to the data is shown and applied to the problem of recovering an image
  from blurred and noisy data.
\end{abstract}

\begin{keywords}
  Total Generalized Variation, linear inverse problems, Tikhonov 
  regularization, deblurring problem.
\end{keywords}

\section{Introduction}
\label{sec:intro}

Most mathematical formulations of inverse problems, in 
particular of mathematical imaging problems are cast in the form
of minimizing a Tikhonov functional, i.e.,
\[
\min_{u} \ F(u) + \alpha R(u)
\]
where $F$ represents the fidelity with respect to the measured data,
$R$ is a regularization functional and $\alpha > 0$ a 
parameter. With a linear, 
continuous and usually ill-posed forward operator $K$ as well as
possibly error-prone data $f$,
the data fidelity term is commonly chosen as
\[
F(u) = \frac{\norm{Ku - f}^2}{2}
\]
with a Hilbert-space norm $\norm{\placeholder}$. For imaging problems,
popular choices for the regularization functionals 
are one-homogeneous functionals, in particular, 
the Total Variation seminorm \cite{osher1992tv}
\[
R(u) = \int_\Omega \dd{\abs{Du}} = \norm[\radon]{Du}
\]
where $\abs{Du}$ denotes the variation-measure of the 
distributional derivative $Du$ which is a vector-valued Radon measure.
It allows for discontinuities to appear along hypersurfaces and therefore
yields a suitable model for images with edges. Unfortunately, for true data
containing smooth regions, Total Variation regularization tends to produce 
undesired 
piecewise constant solutions; a phenomenon which is known as the 
``staircasing effect'' \cite{nikolova2000homogeneity,ring2000structuraltv}. 
To overcome this problem, 
higher-order functionals have been proposed \cite{chambolle1997tv,
  chan2005fourthorder}, for instance the
weighted 
infimal convolution of the first- and second order total-variation, i.e.,
$\int_\Omega \dd{\abs{Du}}$ and $\int_\Omega 
\dd{\abs{D^2u}}$. With such regularizers, a reduction of the 
staircasing effect can be observed. However, it may still occur, usually
in the neighborhood of edges.

In \cite{bredies2010tgv}, the \emph{Total Generalized 
  Variation} ($\TGV$) of order $k$, defined as
\begin{multline}
  \TGV_\alpha^k(u) = \sup \ \Bigl\{\int_\Omega u \divergence^k v \dd{x} \ \Bigl| 
  \ v \in \Ccspace{k}{\Omega,\Sym^k(\RR^d)}, \\
  \norm[\infty]{\divergence^l v} \leq \alpha_l, \ l=0,\ldots,k-1 \Bigr\},
  \label{eq:tgv_def}
\end{multline}
has been proposed and analyzed. It constitutes
a new image model which can be interpreted to incorporate smoothness 
from the first up to the $k$-th derivative. Here, $\Sym^k(\RR^d)$ denotes
the space of symmetric tensors of order $k$ with arguments in $\RR^d$ and 
$\alpha_l > 0$ are fixed parameters. Choosing $k=1$ and $\alpha_0 = 1$ 
yields the usual Total Variation functional. It is immediate that 
in $\LPspace{2}{\Omega}$, the denoising problem which corresponds 
to $K = I$, $\TGV_\alpha^2$ as a regularization
term leads to a well-posed minimization problem for the Tikhonov functional. 
The numerical experiments carried out in \cite{bredies2010tgv} show 
that $\TGV_\alpha^2$ produces visually appealing results with almost no 
staircase effect present in the solution (see Figure~\ref{fig:denoising}).

\begin{figure}[t]
  \centering
  \begin{tabular}{cc}
    \includegraphics[width=0.45\linewidth]{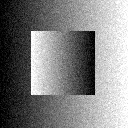} &
    \includegraphics[width=0.45\linewidth]{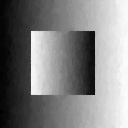} \\[-0.5ex]
    $f_{\mathrm{noise}}$ & $u_{\TV}$ \\[\smallskipamount]
    \includegraphics[width=0.45\linewidth]{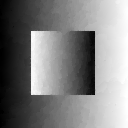} &
    \includegraphics[width=0.45\linewidth]{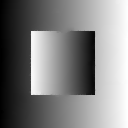} \\[-0.5ex]
    $u_{\mathrm{inf-conv}}$ & $u_{\TGV_\alpha^2}$
  \end{tabular}
  \caption{Denoising with Total Variation, infimal convolution and
    Total Generalized Variation.}
  \label{fig:denoising}
\end{figure}

When solving ill-posed inverse problems for 
$K: \LPspace{2}{\Omega} \rightarrow 
\LPspace{2}{\Omega}$ with a Total Generalized Variation 
regularization, however, 
it is not immediate that the problem of minimizing the 
Tikhonov functional is well-posed. The present paper addresses this 
issue by analyzing the case of $k=2$, i.e., Total 
Generalized Variation of second order. We will show that $\TGV_\alpha^2$
is a semi-norm on $\BV(\Omega)$ and that $\norm[1]{u} + \TGV_\alpha^2(u)$
is topologically equivalent to the $\BV$-norm. 
Based on this result, existence and
well-posedness of $\TGV_\alpha^2$-regularization for Tikhonov functionals
is derived. Moreover, we apply these results to solve an image deblurring 
problem. Finally, numerical experiments illustrate the feasibility of 
Total Generalized Variation regularization of second order.

\section{Basic properties of second-order TGV}

Let us first define Total Generalized Variation of second order 
as well as mention some basic properties.

\begin{definition}
  Let $\Omega \subset \RR^d$ be a bounded domain 
  and $\alpha = (\alpha_0, \alpha_1) 
  > 0$.
  The functional assigning each $u \in \LPlocspace{1}{\Omega}$
  the value
  \begin{multline}
    \label{eq:tgv2_def}
    \TGV_\alpha^2(u) = \sup\ \Bigl\{\int_\Omega u \divergence^2 v \dd{x} \ \Bigl| \ 
    v \in \Ccspace{2}{\Omega,S^{d \times d}}, \\ 
    \norm[\infty]{v} \leq \alpha_0, \ \norm[\infty]{\divergence v} 
    \leq \alpha_1 
    \Bigr\}
  \end{multline}
  is called the \emph{Total Generalized Variation} of second order.
  
  Here, $S^{d \times d}$ is the set of symmetric matrices, $\Ccspace{2}{\Omega,
    S^{d \times d}}$ the vector space of compactly supported, twice continuously 
  differentiable
  $S^{d \times d}$-valued mappings and $\divergence v \in 
  \Ccspace{1}{\Omega,\RR^d}$, $\divergence^2 v \in \Ccspace{}{\Omega}$ 
  is defined by
  \begin{align*}
    (\divergence v)_i &= \sum_{j=1}^d \frac{\partial v_{ij}}{\partial x_j}, & 
    \divergence^2 v &= \sum_{i=1}^d \frac{\partial^2 v_{ii}}{\partial x_i^2} 
    + 2 \sum_{i < j} \frac{\partial^2 v_{ij}}{\partial x_i \partial x_j}.
  \end{align*}
  The norms of $v \in \Ccspace{}{\Omega,S^{d \times d}}$, $\omega 
  \in \Ccspace{}{\Omega, \RR^d}$ are given by
  \begin{align*}
    \norm[\infty]{v} &= \sup_{x \in \Omega} \ \Bigl( \sum_{i=1}^d \abs{v_{ii}(x)}^2 + 2 \sum_{i < j} \abs{v_{ij}(x)}^2 \Bigr)^{1/2}, \\
    \norm[\infty]{\omega} &= \sup_{x \in \Omega} \ \Bigl( \sum_{i=1}^d \abs{\omega_i(x)}^2 \Bigr)^{1/2}.
  \end{align*}
  The space
  \[
  \BGV_\alpha^2(\Omega) = \set{u \in \LPspace{1}{\Omega}}{\TGV_\alpha^2(u) 
    < \infty}
  \]
  equipped with the norm
  \[
  \norm[\BGV_\alpha^2]{u} = \norm[1]{u} + \TGV_\alpha^2(u)
  \]
  is called the space of functions of \emph{Bounded Generalized Variation}
  of order $2$.
\end{definition}

Basic results about this functional obtained in  \cite{bredies2010tgv} 
can be summarized as follows.

\begin{theorem}
  \label{thm:tgv_prop}
  Total Generalized Variation of second order enjoys the following properties:
  \begin{compactenum}
  \item $\TGV_\alpha^2$ is a semi-norm on the Banach space 
    $\BGV_\alpha^2(\Omega)$,
  \item $\TGV_\alpha^2(u) = 0$ if and only if $u$ is a polynomial of degree 
    less than $2$,
  \item
    $\TGV_\alpha^2$ and $\TGV_{\tilde \alpha}^2$ are equivalent for $\tilde \alpha = 
    (\tilde\alpha_0, \tilde \alpha_1) > 0$,
  \item
    $\TGV_\alpha^2$ is rotationally invariant,
  \item
    $\TGV_\alpha^2$ satisfies, for $r > 0$ and $(\rho_ru)(x) = u(rx)$, 
    the scaling property
    \[
    \TGV_\alpha^2 \compose \rho_r = r^{-d} \TGV_{\tilde \alpha}^2(u), \ \ 
    (\tilde\alpha_0, \tilde \alpha_1) = (\alpha_0r^2, \alpha_1r),
    \]
  \item
    $\TGV_\alpha^2$ is proper, convex and lower semi-continuous on each 
    $\LPspace{p}{\Omega}$, $1 \leq p < \infty$.
  \end{compactenum}
\end{theorem}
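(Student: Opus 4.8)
The plan is to read every assertion off the dual definition \eqref{eq:tgv2_def}. Setting
\[
K_\alpha = \bigsett{v \in \Ccspace{2}{\Omega,S^{d \times d}} \mid \norm[\infty]{v} \leq \alpha_0,\ \norm[\infty]{\divergence v} \leq \alpha_1},
\]
we see that $\TGV_\alpha^2$ is the pointwise supremum over the symmetric, convex, bounded set $K_\alpha$ of the linear functionals $u \mapsto \int_\Omega u\,\divergence^2 v \dd{x}$. Moreover, two integrations by parts yield, for smooth $u$ and in the distributional sense in general,
\[
\int_\Omega u\,\divergence^2 v \dd{x} = \int_\Omega \deriv^2 u : v \dd{x},
\]
where $A:B$ denotes the Frobenius product and the norm $\norm[\infty]{\placeholder}$ appearing in \eqref{eq:tgv2_def} is the associated pointwise Frobenius norm on symmetric matrices; this identity, which ties $\TGV_\alpha^2$ to the symmetrized distributional Hessian $\deriv^2 u$, underlies items (2), (4) and (5).

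Items (1), (3) and (6) use nothing beyond elementary properties of suprema. As $K_\alpha = -K_\alpha$, the supremum of the associated linear functionals is absolutely one-homogeneous, and subadditivity of the supremum gives the triangle inequality, so $\TGV_\alpha^2$ is a seminorm and $\norm[\BGV_\alpha^2]{\placeholder}$ a norm; its completeness follows from the $\LPspace{1}{\Omega}$-lower semicontinuity of $\TGV_\alpha^2$ (item (6) with $p=1$), since a $\norm[\BGV_\alpha^2]{\placeholder}$-Cauchy sequence converges in $\LPspace{1}{\Omega}$ and its limit then has finite, indeed controlled, $\TGV_\alpha^2$. Property (3) follows from the inclusions $c\,K_\alpha \subset K_{\tilde\alpha} \subset C\,K_\alpha$ with $c = \min\{\tilde\alpha_0/\alpha_0,\tilde\alpha_1/\alpha_1\}$ and $C = \max\{\tilde\alpha_0/\alpha_0,\tilde\alpha_1/\alpha_1\}$, which give $c\,\TGV_\alpha^2 \leq \TGV_{\tilde\alpha}^2 \leq C\,\TGV_\alpha^2$. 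For (6), each functional $u \mapsto \int_\Omega u\,\divergence^2 v \dd{x}$ is continuous on $\LPspace{p}{\Omega}$ because $\divergence^2 v \in \Ccspace{}{\Omega}$ is bounded with compact support, hence lies in $\LPspace{p'}{\Omega}$; a supremum of continuous linear functionals is convex, positively one-homogeneous and lower semicontinuous, and it is proper since it vanishes at $u \equiv 0$.

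For (2): if $u$ is a polynomial of degree less than $2$ then $\deriv^2 u = 0$, so by the identity above every integrand in \eqref{eq:tgv2_def} vanishes and $\TGV_\alpha^2(u) = 0$. Conversely, if $\TGV_\alpha^2(u) = 0$, then scaling an arbitrary $v \in \Ccspace{2}{\Omega,S^{d \times d}}$ down into $K_\alpha$ shows $\int_\Omega u\,\divergence^2 v \dd{x} = 0$ for \emph{all} such $v$; testing with fields having a single diagonal (resp.\ symmetric off-diagonal) entry, this says that every second distributional derivative of $u$ vanishes on $\Omega$. Since $\Omega$ is a domain, and in particular connected, the first derivatives of $u$ are then constant, so $u$ coincides with a polynomial of degree less than $2$. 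I expect this converse direction — passing from the vanishing of the distributional Hessian to the conclusion that $u$ is affine — to be the only step requiring a genuine, if standard, distribution-theoretic argument; everything else is formal manipulation.

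Items (4) and (5) are changes of variables in \eqref{eq:tgv2_def}. For rotational invariance, fix an orthogonal $O$; the map $v \mapsto O^\transp v(O\placeholder)O$ is a bijection of $K_\alpha$ onto itself, because the Frobenius norm is orthogonally invariant and $\divergence$ is $O$-equivariant (one checks $\divergence\bigl[O^\transp v(O\placeholder)O\bigr] = O^\transp (\divergence v)(O\placeholder)$), and substituting $x \mapsto O^\transp x$ in the integral then gives $\TGV_\alpha^2(u \compose O) = \TGV_\alpha^2(u)$. For the scaling property, substitute $y = rx$ in $\TGV_\alpha^2(\rho_r u)$ and simultaneously replace the test field $v$ by $r^2 v(\placeholder/r)$: the two derivatives in $\divergence^2$ each contribute a factor $r^{-1}$ that cancels the prefactor $r^2$, the change of variables contributes $r^{-d}$, and the constraints $\norm[\infty]{v} \leq \alpha_0$ and $\norm[\infty]{\divergence v} \leq \alpha_1$ become the constraints defining $\TGV_{\tilde\alpha}^2$ with $\tilde\alpha = (\alpha_0 r^2, \alpha_1 r)$. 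Both are routine bookkeeping.
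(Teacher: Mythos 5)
The paper offers no proof of this theorem: it is explicitly presented as a summary of results established in the cited reference \cite{bredies2010tgv}, so there is no internal proof to compare against. Your argument is correct and follows the same standard route as that reference, reading everything off the dual definition \eqref{eq:tgv2_def}: symmetry and convexity of the constraint set for items (1), (3) and (6), test-field computations reducing (2) to the vanishing of the distributional Hessian on a connected domain, and changes of variables for (4) and (5); the two steps needing genuine care --- completeness of $\BGV_\alpha^2(\Omega)$ via $\LPspace{1}{\Omega}$-lower semicontinuity, and the kernel characterization --- are both handled correctly.
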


\section{Topological equivalence with BV}

To obtain existence results for Tikhonov functionals with 
$\TGV_\alpha^2$-penalty, we reduce the functional-analytic setting to
the space $\BV(\Omega)$ by establishing topological equivalence. 
This will be done in two steps. 

\begin{theorem}
  Let $\Omega \subset \RR^d$ be a bounded domain.
  For each $u \in \LPspace{1}{\Omega}$ we have
  \begin{equation}
    \label{eq:tgv2_min}
    \TGV_\alpha^2(u) = \min_{w \in \BD(\Omega)} \ \alpha_1\norm[\radon]{Du - w}
    + \alpha_0 \norm[\radon]{\symgrad w}
  \end{equation}
  where $\BD(\Omega)$ denotes the space of vector fields of \emph{Bounded
    Deformation} \cite{temam85plasticity}, 
  i.e., $w \in \LPspace{1}{\Omega,\RR^d}$ such that
  the distributional symmetrized derivative $\symgrad w = \frac12(\grad w 
  + \grad w^\transp)$ is a $S^{d \times d}$-valued Radon measure.
\end{theorem}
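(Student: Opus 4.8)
The plan is to prove the two inequalities separately: ``$\le$'' by integration by parts, and ``$\ge$'' --- which simultaneously forces $u \in \BV(\Omega)$ and produces the minimizer --- by a Hahn--Banach argument.

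For ``$\le$'', note first that if $u \notin \BV(\Omega)$ the right-hand side is $+\infty$, since $\norm[\radon]{Du - w} < \infty$ for some $w \in \BD(\Omega) \subset \LPspace{1}{\Omega,\RR^d}$ would make $Du = (Du - w) + w$ a finite Radon measure. So let $u \in \BV(\Omega)$. For admissible $v$ and arbitrary $w \in \BD(\Omega)$, integrating by parts twice --- first using $u \in \BV(\Omega)$, then $w \in \BD(\Omega)$ and the symmetry of $v$ --- gives
\[
\int_\Omega u \divergence^2 v \dd x = -\int_\Omega \divergence v \inprod \dd{(Du - w)} + \int_\Omega v : \dd{\symgrad w}.
\]
Estimating the right-hand side by $\alpha_1 \norm[\radon]{Du - w} + \alpha_0 \norm[\radon]{\symgrad w}$ using $\norm[\infty]{v} \le \alpha_0$, $\norm[\infty]{\divergence v} \le \alpha_1$, and taking the supremum over $v$ and then the infimum over $w$, gives $\TGV_\alpha^2(u) \le \inf_{w \in \BD(\Omega)} \bigl(\alpha_1 \norm[\radon]{Du - w} + \alpha_0 \norm[\radon]{\symgrad w}\bigr)$.

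For ``$\ge$'' we may assume $M := \TGV_\alpha^2(u) < \infty$. By homogeneity in $v$, the definition of $\TGV_\alpha^2$ yields $\bigabs{\int_\Omega u \divergence^2 v \dd x} \le M \max\bigl(\alpha_0^{-1} \norm[\infty]{v},\, \alpha_1^{-1} \norm[\infty]{\divergence v}\bigr)$ for every $v \in \Ccspace{2}{\Omega, S^{d \times d}}$. Thus the linear functional $(v, \divergence v) \mapsto \int_\Omega u \divergence^2 v \dd x$ has norm at most $M$ on the subspace $\set{(v, \divergence v)}{v \in \Ccspace{2}{\Omega, S^{d \times d}}}$ of $\mathcal{C}_0(\Omega, S^{d \times d}) \times \mathcal{C}_0(\Omega, \RR^d)$, the latter equipped with the norm $(\varphi, \psi) \mapsto \max\bigl(\alpha_0^{-1} \norm[\infty]{\varphi},\, \alpha_1^{-1} \norm[\infty]{\psi}\bigr)$. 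Extending it by Hahn--Banach and representing the extension via Riesz's theorem --- the dual of this product space being $\radonspace{\Omega, S^{d \times d}} \times \radonspace{\Omega, \RR^d}$ with norm $(\mu, \nu) \mapsto \alpha_0 \norm[\radon]{\mu} + \alpha_1 \norm[\radon]{\nu}$ --- produces Radon measures $\mu, \nu$ with $\alpha_0 \norm[\radon]{\mu} + \alpha_1 \norm[\radon]{\nu} \le M$ and
\[
\int_\Omega u \divergence^2 v \dd x = \int_\Omega v : \dd\mu + \int_\Omega \divergence v \inprod \dd\nu \qquad \text{for all } v \in \Ccspace{2}{\Omega, S^{d \times d}}.
\]
Since the left-hand side equals $-\scp{Du}{\divergence v}$ in the sense of distributions, this says exactly that the (a priori only distributional) vector field $w := Du + \nu$ satisfies $\symgrad w = \mu$; in particular $\symgrad w$ is a finite Radon measure.

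What remains --- and what I expect to be the principal difficulty --- is to show that this $w$ is represented by an $\LPspace{1}{\Omega, \RR^d}$ function, so that $w \in \BD(\Omega)$. This is the relevant regularity property of the symmetrized gradient: a distributional vector field whose symmetrized derivative is a finite Radon measure belongs to $\BD(\Omega)$; its proof relies on the $\BD$-analogue of the Sobolev--Poincar\'e inequality (Korn's inequality being unavailable in $\lebesgueL{1}$) and, for a global $\lebesgueL{1}$ bound up to the boundary, on mild regularity of $\partial\Omega$. Granting $w \in \BD(\Omega)$, one gets $Du = w - \nu \in \radonspace{\Omega, \RR^d}$, hence $u \in \BV(\Omega)$; and since $Du - w = -\nu$ and $\symgrad w = \mu$,
\[
\alpha_1 \norm[\radon]{Du - w} + \alpha_0 \norm[\radon]{\symgrad w} = \alpha_1 \norm[\radon]{\nu} + \alpha_0 \norm[\radon]{\mu} \le M = \TGV_\alpha^2(u),
\]
which together with ``$\le$'' gives the stated identity and shows the infimum is attained at $w$. (The same conclusion can be read as Fenchel--Rockafellar duality, with $v = 0$ serving as a Slater point for the dual problem, once membership $u \in \BV(\Omega)$ is in hand.) Apart from this regularity input, the Hahn--Banach step and the integrations by parts are routine.
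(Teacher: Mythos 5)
Your proof is correct in outline and is essentially the paper's argument with the duality machinery unrolled by hand: the paper invokes Fenchel--Rockafellar duality (in the Attouch--Br\'ezis form) for $F_1(v)=I_{\sett{\norm[\infty]{\placeholder}\le\alpha_0}}(v)$, $F_2(\omega)=I_{\sett{\norm[\infty]{\placeholder}\le\alpha_1}}(\omega)-\int_\Omega u\divergence \omega\dd{x}$ and $\Lambda=\divergence$, and your two halves correspond exactly to weak duality (the integration by parts giving ``$\le$'') and to dual attainment (the Hahn--Banach extension plus Riesz representation giving ``$\ge$'' together with a minimizer) --- an equivalence you note yourself. What your version buys is transparency about where the measures $\mu,\nu$ and the dual norm $\alpha_0\norm[\radon]{\mu}+\alpha_1\norm[\radon]{\nu}$ come from; what the paper's version buys is that the dual variable lives a priori in $Y^*=\Cspace[0]{1}{\Omega,\RR^d}^*$, i.e.\ is a continuous functional on $\Cspace[0]{1}{\Omega,\RR^d}$ rather than the bare distribution $w=Du+\nu$ your construction produces, which is genuinely useful for the one step both treatments leave unproven. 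That step is the same in both: the paper's asserted formula for $F_1^*(-\Lambda^*w)$ silently contains the claim that finiteness of $\sup\{\langle w,-\divergence v\rangle : \norm[\infty]{v}\le 1\}$ forces $w\in\BD(\Omega)$, i.e.\ that $w$ is an integrable vector field; you correctly isolate this as the remaining regularity lemma (local integrability from the mollification/rigid-motions argument, integrability up to $\bdry\Omega$ needing some boundary regularity --- a point worth flagging since the theorem is stated for an arbitrary bounded domain). Neither you nor the paper's sketch proves it, so your proposal matches the paper's level of rigor; the only detail I would add is to check that the weighted Frobenius norms in the definition of $\norm[\infty]{v}$ are paired consistently with the Radon norms of the symmetric-matrix-valued measures, so that your estimates are genuine dual-norm identities.
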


\begin{proof}[Sketch of proof]
  Choosing $X = \Cspace[0]{2}{\Omega,S^{d\times d}}$, 
  $Y = \Cspace[0]{1}{\Omega,\RR^d}$, $\Lambda = \divergence 
  \in \linspace{X}{Y}$ and, for $v \in X$, $\omega \in Y$,
  \begin{align*}
    F_1(v) 
    &= I_{\sett{\norm[\infty]{\placeholder} \leq \alpha_0}}(v), \\
    F_2(\omega) &=
    I_{\sett{\norm[\infty]{\placeholder} \leq \alpha_1}}(\omega) 
    - \int_\Omega u 
    \divergence \omega \dd{x}
  \end{align*}
  we see with density arguments that
  \[
  \TGV_\alpha^2(u) = - \inf_{v \in X} \ F_1(v) + F_2(\Lambda v).
  \]
  Furthermore, $Y = \bigcup_{\lambda \geq 0} \lambda \bigl( \dom(F_1) - 
  \Lambda \dom(F_2) \bigr)$, hence by Fenchel-Rockafellar duality 
  \cite{attouch1986duality}, it follows that
  \[
  \TGV_\alpha^2(u) = \min_{w \in Y^*} \ F_1^*(-\Lambda^*w) + F_2^*(w).
  \]
  Now, $Y^* = \Cspace[0]{1}{\Omega,\RR^d}^*$ can be regarded as a space
  of distributions and the dual functionals can be written as
  \begin{align*}
    F_1^*(-\Lambda^*w) &=
    \begin{cases}
      \alpha_0 \norm[\radon]{\symgrad w} & \text{if} \ 
      w \in \BD(\Omega) \\
      \infty & \text{else},
    \end{cases} \\
    F_2^*(w) &=
    \begin{cases}
      \alpha_1 \norm[\radon]{Du - w} & \text{if} \ Du - w \in 
      \radonspace{\Omega,\RR^d} \\
      \infty & \text{else}.
    \end{cases}
  \end{align*}
  Since $\BD(\Omega) \subset \radonspace{\Omega,\RR^d}$, the result follows.
\end{proof}

\begin{remark}
  The minimization in~\eqref{eq:tgv2_min} can be interpreted as an
  optimal balancing between the first and second derivative of $u$ in 
  terms of ``sparse'' penalization (via the Radon norm).
\end{remark}

The second step combines~\eqref{eq:tgv2_min} with the 
Sobolev-Korn inequality for vector fields of Bounded Deformation.

\begin{theorem}
  \label{thm:tgv_bg_equiv}
  Let $\Omega \subset \RR^d$ be a bounded Lipschitz domain. Then there
  exist constants $0 < c < C < \infty$ such that for each $u \in 
  \BGV_\alpha^2(\Omega)$ there holds
  \[
  c \norm[\BV]{u} \leq \norm[1]{u} + \TGV_\alpha^2(u) \leq C \norm[\BV]{u}.
  \]
\end{theorem}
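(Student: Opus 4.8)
I would prove the two inequalities separately, since they are of quite different nature. The upper bound is immediate from the dual representation~\eqref{eq:tgv2_min}: inserting the competitor $w = 0$ gives $\TGV_\alpha^2(u) \le \alpha_1 \norm[\radon]{Du}$, hence
\[
\norm[1]{u} + \TGV_\alpha^2(u) \le \norm[1]{u} + \alpha_1\norm[\radon]{Du} \le \max\{1,\alpha_1\}\,\norm[\BV]{u},
\]
so $C = \max\{1,\alpha_1\}$ works (and, incidentally, $\BV(\Omega) \subseteq \BGV_\alpha^2(\Omega)$).

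For the lower bound, the plan is to fix $u \in \BGV_\alpha^2(\Omega)$ and pick a minimizer $w \in \BD(\Omega)$ in~\eqref{eq:tgv2_min}, so that both $\alpha_1\norm[\radon]{Du - w} \le \TGV_\alpha^2(u)$ and $\alpha_0\norm[\radon]{\symgrad w} \le \TGV_\alpha^2(u)$. Since $\norm[\radon]{Du} \le \norm[\radon]{Du - w} + \norm[1]{w}$, everything reduces to bounding $\norm[1]{w}$ by a constant times $\norm[1]{u} + \TGV_\alpha^2(u)$. Here the Sobolev--Korn inequality for fields of bounded deformation enters: on a bounded connected Lipschitz domain there is a bounded linear projection $P$ of $\BD(\Omega)$ onto the finite-dimensional space $\mathcal{R}$ of rigid displacements $x \mapsto Ax + b$ with $A$ skew-symmetric, satisfying $\norm[d/(d-1)]{w - Pw} \le c_\Omega\,\norm[\radon]{\symgrad w}$; combined with H\"older's inequality on $\Omega$ this yields $\norm[1]{w - Pw} \le c_\Omega'\,\TGV_\alpha^2(u)$.

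The hard part is then to control the rigid component $Pw$, about which Korn's inequality says nothing. The idea is that $\mu := Du - Pw = (Du - w) + (w - Pw)$ is a Radon measure of mass bounded by a multiple of $\TGV_\alpha^2(u)$. Because $Du$ is curl-free, a distributional curl of $\mu$ equals a fixed constant multiple of the skew matrix $A$, so pairing $\mu$ against a single $\phi \in \Ccspace{\infty}{\Omega}$ normalised by $\int_\Omega \phi\dd{x} = 1$ bounds $\abs{A}$ by $c_\Omega''\,\norm[\radon]{\mu}$; once $A$ is under control, $Du - b = Ax + \mu$ is again a measure of controlled mass, and pairing it with $e_i\phi$ while using $\scp{Du}{e_i\phi} = -\int_\Omega u\,\partial_i\phi\dd{x}$ bounds $\abs{b}$ by a multiple of $\norm[1]{u} + \TGV_\alpha^2(u)$. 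Collecting the estimates gives $\norm[1]{Pw} \le C_1\norm[1]{u} + C_2\,\TGV_\alpha^2(u)$, hence the claimed lower bound with $c = c(\alpha,\Omega) > 0$.

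A softer alternative for the lower bound is a compactness argument: were the inequality false, there would be $u_n$ with $\norm[\BV]{u_n} = 1$ and $\norm[1]{u_n} + \TGV_\alpha^2(u_n) \to 0$; the associated minimizers $w_n$ would satisfy $\symgrad w_n \to 0$ and $Du_n - w_n \to 0$ in $\radon$, so $w_n - Pw_n \to 0$ in $\LPspace{1}{\Omega,\RR^d}$ by Sobolev--Korn, while $\norm[1]{w_n} \to 1$ forces $Pw_n$ to converge in the finite-dimensional space $\mathcal{R}$ to some nonzero $r$, whence $Du_n \to r\,\lebesgue{d}$ in $\radon$, contradicting $Du_n \to 0$ in $\mathcal{D}'(\Omega)$ which follows from $u_n \to 0$ in $\LPspace{1}{\Omega}$. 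Either way the essential obstacle is the same: Korn's inequality controls $w$ only modulo rigid displacements, so the rigid part must be recovered separately from the curl-free structure of $Du$ together with the $L^1$-bound on $u$.
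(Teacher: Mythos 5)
Your proposal is correct, and it in fact contains two workable proofs of the lower bound. The upper bound (take $w=0$ in \eqref{eq:tgv2_min}, $C=\max\{1,\alpha_1\}$) is exactly the paper's. Your ``softer alternative'' is essentially the paper's argument: the paper also argues by contradiction, using the Sobolev--Korn inequality to replace the competitor $w\in\BD(\Omega)$ by a nearby rigid displacement $\bar w\in\ker\symgrad$, and then exploiting finite-dimensionality of $\ker\symgrad$ together with the distributional closedness of $D$ (so that $u^n\to 0$ in $\LPspace{1}{\Omega}$ forces any limit of $Du^n$ to vanish); the only structural difference is that the paper first isolates the inequality $\norm[\radon]{Du}\leq C_1(\norm[\radon]{Du-\bar w}+\norm[1]{u})$ for all $\bar w\in\ker\symgrad$ as a separate compactness lemma and only afterwards inserts Korn. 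Your primary route is genuinely different and buys something: instead of compactness for the rigid part, you recover $Pw(x)=Ax+b$ constructively, reading off $A$ from the distributional curl of $\mu=Du-Pw$ (which equals a fixed multiple of $A$ because $Du$ is curl-free) tested against one normalised $\phi\in\Ccspace{\infty}{\Omega}$, and then $b$ from $\scp{(Du)_i}{\phi}=-\int_\Omega u\,\partial_i\phi\dd{x}$; this yields an explicit constant $c=c(\alpha,\Omega)$ in terms of $\norm[\infty]{\grad\phi}$ and the Korn constant, whereas the paper's argument is non-quantitative. Two routine points to make explicit: $u\in\BGV_\alpha^2(\Omega)$ already forces $Du\in\radonspace{\Omega,\RR^d}$ (any $w$ with finite value in \eqref{eq:tgv2_min} lies in $\LPspace{1}{\Omega,\RR^d}$, so $Du=(Du-w)+w$ is a finite measure), so $\norm[\BV]{u}$ is finite; and in the alternative route one should pass to a subsequence of the bounded sequence $Pw_n$ in the finite-dimensional space $\mathcal{R}$ before extracting the nonzero limit $r$.
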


\begin{proof}
  Setting $w = 0$ in~\eqref{eq:tgv2_min}
  immediately implies that for each $u \in \BGV_\alpha^2(\Omega)$ we have
  $\TGV_\alpha^2(u) \leq \alpha_1 \TV(u)$, hence
  we can set $C = \max(1,\alpha_1)$.
  
  On the other hand, we may assume that $Du \in 
  \radonspace{\Omega,\RR^d}$ since otherwise, $\norm[\radon]{Du - w}
  = \infty$ for all 
  $w \in \BD(\Omega)$ and hence,
  $\TGV_\alpha^2(u) = \infty$ by~\eqref{eq:tgv2_min}.
  Observe that $\bar w \in \ker{\symgrad}$ if and only if
  $\bar w(x) = Ax + b$ for some $A \in \RR^{d \times d}$ satisfying $A^\transp 
  = -A$ and $b \in \RR^d$.
  We  show that there is a $C_1 > 0$ such that 
  for each $u \in \BV(\Omega)$ and $\bar w \in \ker \symgrad$ there holds
  \begin{equation}
    \label{eq:tgv_equiv_help}
    \norm[\radon]{Du} \leq C_1 \bigl( \norm[\radon]{Du 
      - \bar w} + \norm[1]{u} \bigr).
  \end{equation}
  Suppose that this is not the case. Then, there exist sequences
  $\seq{u^n}$ in 
  $\BV(\Omega)$ and $\seq{\bar w^n}$ in $\ker \symgrad$ such that 
  \[
  \norm[\radon]{Du^n} = 1, \quad
  \norm[\radon]{Du^n - \bar w^n} + \norm[1]{u^n} \leq \tfrac1n
  \]
  for all $n \in \NN$.
  Consequently, $\lim_{n\rightarrow \infty} u^n = 0$ in $\LPspace{1}{\Omega}$,
  $\lim_{n\rightarrow \infty} Du^n - \bar w^n  = 0$ 
  in $\radonspace{\Omega,\RR^d}$  
  and $\seq{\bar w^n}$ is bounded in $\ker\symgrad$. Since the latter is 
  finite-dimensional, there exists a convergent subsequence, i.e., 
  $\lim_{k\rightarrow \infty} \bar w^{n_k} = w$ for some $w \in \ker\symgrad$.
  It follows that $\lim_{k \rightarrow \infty} Du^{n_k} = w$, thus $w = 0$ by 
  closedness of the distributional derivative. This means in particular
  $\lim_{k \rightarrow \infty} \norm[\radon]{Du^{n_k}} = 0$ which is
  a contradiction since each $\norm[\radon]{Du^{n_k}} = 1$.
  
  Next, recall that a Sobolev-Korn inequality holds for $\BD(\Omega)$ 
  \cite{temam85plasticity}:
  There is a $C_2 > 0$ such that for each $w \in \BD(\Omega)$ there
  exists a $\bar w \in \ker\symgrad$ such that $\norm[1]{w - \bar w} 
  \leq C_2 \norm[\radon]{\symgrad w}$. For this $\bar w$, we have, 
  for some $C_3 > 0$,
  \begin{align*}
    \norm[\radon]{Du - \bar w} &\leq \norm[\radon]{Du - w} 
    + \norm[1]{w - \bar w} \\
    &\leq C_3 \bigl( \alpha_1 \norm[\radon]{Du - w} 
    + \alpha_0 \norm[\radon]{\symgrad w} \bigr).
  \end{align*}
  Plugged into~\eqref{eq:tgv_equiv_help} and adding $\norm[1]{u}$ 
  on both sides, it follows that 
  the inequality 
  \[
  \norm[\BV]{u} 
  \leq C_4 \bigl(\norm[1]{u} + \alpha_1 \norm[\radon]{Du - w} + 
  \alpha_0 \norm[\radon]{\symgrad w} \bigr)
  \]
  holds for some $C_4> 0$ independent of $u$ and $w$. Taking the minimum
  over all $w \in \BD(\Omega)$ and choosing $c = C_4^{-1}$ finally yields 
  the result by virtue of~\eqref{eq:tgv2_min}.
\end{proof}

Since both $\BV(\Omega)$ and $\BGV_\alpha^2(\Omega)$ are Banach spaces, we 
immediately have:

\begin{corollary}
  If $\Omega \subset\RR^d$ is a bounded Lipschitz-domain, then
  $\BGV_\alpha^2(\Omega) = \BV(\Omega)$ for all $(\alpha_0,\alpha_1) > 0$
  in the sense of topologically equivalent Banach spaces.
\end{corollary}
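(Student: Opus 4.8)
The plan is to obtain the corollary directly from Theorem~\ref{thm:tgv_bg_equiv} together with the elementary bound $\TGV_\alpha^2(u) \leq \alpha_1 \TV(u)$, which holds for every $u \in \LPspace{1}{\Omega}$ and is read off from the representation~\eqref{eq:tgv2_min} by inserting the admissible choice $w = 0$. The only things to check are that $\BGV_\alpha^2(\Omega)$ and $\BV(\Omega)$ coincide as sets and that the norms $\norm[\BGV_\alpha^2]{\placeholder}$ and $\norm[\BV]{\placeholder}$ are equivalent on this common set; topological equivalence of the two Banach spaces is then immediate, since equivalent norms induce the same topology and both spaces are complete.

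First I would settle the set-theoretic identity. For the inclusion $\BV(\Omega) \subseteq \BGV_\alpha^2(\Omega)$: if $u \in \BV(\Omega)$, then $u \in \LPspace{1}{\Omega}$ and $\TV(u) < \infty$, so the bound above gives $\TGV_\alpha^2(u) \leq \alpha_1 \TV(u) < \infty$, i.e. $u \in \BGV_\alpha^2(\Omega)$, and moreover
\[
\norm[\BGV_\alpha^2]{u} = \norm[1]{u} + \TGV_\alpha^2(u) \leq \norm[1]{u} + \alpha_1 \TV(u) \leq \max(1,\alpha_1)\,\norm[\BV]{u}.
\]
For the reverse inclusion $\BGV_\alpha^2(\Omega) \subseteq \BV(\Omega)$: if $u \in \BGV_\alpha^2(\Omega)$, the left-hand inequality of Theorem~\ref{thm:tgv_bg_equiv} yields $c\,\norm[\BV]{u} \leq \norm[\BGV_\alpha^2]{u} < \infty$, whence $u \in \BV(\Omega)$ and $\norm[\BV]{u} \leq c^{-1}\norm[\BGV_\alpha^2]{u}$.

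Combining the two displays, on the common underlying vector space one has $c\,\norm[\BV]{u} \leq \norm[\BGV_\alpha^2]{u} \leq \max(1,\alpha_1)\,\norm[\BV]{u}$, so the two norms are equivalent; hence the identity map is a linear homeomorphism between $\BV(\Omega)$ and $\BGV_\alpha^2(\Omega)$, which is exactly topological equivalence of these Banach spaces. I do not anticipate any genuine obstacle: the substantive work was done in Theorem~\ref{thm:tgv_bg_equiv}, and the only point requiring attention is that its upper estimate is a priori phrased only for $u \in \BGV_\alpha^2(\Omega)$, so the inclusion $\BV(\Omega) \subseteq \BGV_\alpha^2(\Omega)$ must be extracted separately from the $w = 0$ bound before the norm equivalence can be asserted on all of $\BV(\Omega)$.
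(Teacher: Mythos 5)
Your argument is correct and follows the paper's route exactly: the paper derives the corollary as an immediate consequence of Theorem~\ref{thm:tgv_bg_equiv} (together with the fact that both spaces are Banach spaces), and the $w=0$ bound $\TGV_\alpha^2(u)\leq\alpha_1\TV(u)$ you use for the inclusion $\BV(\Omega)\subseteq\BGV_\alpha^2(\Omega)$ is precisely the estimate established at the start of that theorem's proof. You merely make explicit the set-equality bookkeeping that the paper leaves implicit.
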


\section{Existence and stability of solutions}

Let, in the following, $\Omega \subset \RR^d$ be a bounded
Lipschitz domain. The coercivity needed for showing existence of solutions
for the Tikhonov functional is implied by the following inequality
of Poincar\'e-Wirtinger type.

\begin{proposition}
  \label{prop:tgv_poincare}
  Let $1 < p < \infty$ such that $p \leq d/(d-1)$ and 
  $P:\LPspace{p}{\Omega} \rightarrow \mathcal{P}^1(\Omega)$ a linear
  projection onto the space of affine functions $\mathcal{P}^1(\Omega)$.
  Then, there is a $C > 0$ such that
  \begin{equation}
    \label{eq:tgv_poincare}
    \norm[p]{u} \leq C \TGV_\alpha^2(u) \qquad \forall u \in 
    \ker P \subset \LPspace{p}{\Omega}.
  \end{equation}
\end{proposition}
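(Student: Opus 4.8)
The plan is to argue by contradiction and to reduce everything to the space $\BV(\Omega)$ via Theorem~\ref{thm:tgv_bg_equiv}. Suppose~\eqref{eq:tgv_poincare} fails for every $C > 0$. Then one obtains a sequence $\seq{u^n}$ in $\ker P \subset \LPspace{p}{\Omega}$ with $\norm[p]{u^n} = 1$ and $\TGV_\alpha^2(u^n) \to 0$. Since $\Omega$ is bounded and $p > 1$, Hölder's inequality gives $\norm[1]{u^n} \leq \abs{\Omega}^{1-1/p}$, so $\seq{u^n}$ is bounded in $\LPspace{1}{\Omega}$; combined with $\TGV_\alpha^2(u^n) \to 0$ and Theorem~\ref{thm:tgv_bg_equiv} (which bounds $\norm[\BV]{u}$ by a constant times $\norm[1]{u} + \TGV_\alpha^2(u)$), this makes $\seq{u^n}$ bounded in $\BV(\Omega)$.

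The next step is to extract a limit. By the compact embedding $\BV(\Omega) \compactembed \LPspace{1}{\Omega}$ there is a subsequence (not relabelled) with $u^n \to u$ in $\LPspace{1}{\Omega}$; since $\seq{u^n}$ is also bounded in the reflexive space $\LPspace{p}{\Omega}$, a further subsequence satisfies $u^n \wrightarrow u$ in $\LPspace{p}{\Omega}$, the two limits agreeing (test against $\LPspace{\infty}{\Omega}$). Lower semi-continuity of $\TGV_\alpha^2$ on $\LPspace{1}{\Omega}$ (Theorem~\ref{thm:tgv_prop}) yields $\TGV_\alpha^2(u) \leq \liminf_n \TGV_\alpha^2(u^n) = 0$, so by Theorem~\ref{thm:tgv_prop} the function $u$ is affine, $u \in \mathcal{P}^1(\Omega)$. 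Because $P$ is continuous on $\LPspace{p}{\Omega}$ it is weakly continuous, hence $Pu = \lim_n Pu^n = 0$; as $P$ restricts to the identity on $\mathcal{P}^1(\Omega)$, this forces $u = Pu = 0$.

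It remains to reach the contradiction. With $u = 0$ we now have $\norm[1]{u^n} \to 0$ and $\TGV_\alpha^2(u^n) \to 0$, so Theorem~\ref{thm:tgv_bg_equiv} gives $\norm[\BV]{u^n} \leq c^{-1}\bigl(\norm[1]{u^n} + \TGV_\alpha^2(u^n)\bigr) \to 0$. Finally I would invoke the Sobolev inequality $\BV(\Omega) \embed \LPspace{d/(d-1)}{\Omega}$, valid on the bounded Lipschitz domain $\Omega$; since $p \leq d/(d-1)$ and $\abs{\Omega} < \infty$, this yields $\norm[p]{u^n} \leq C_1 \norm[\BV]{u^n} \to 0$, contradicting $\norm[p]{u^n} = 1$. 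Hence~\eqref{eq:tgv_poincare} holds with some finite $C$.

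I expect the only genuinely delicate point to be the borderline exponent $p = d/(d-1)$: there $\BV(\Omega)$ does \emph{not} embed compactly into $\LPspace{p}{\Omega}$, so one cannot directly conclude $u^n \to u$ strongly in $\LPspace{p}{\Omega}$ and simply read off the contradiction from $\norm[p]{u^n} = 1$. This is exactly why the argument is routed through $\norm[\BV]{u^n} \to 0$ and the \emph{sharp continuous} embedding into $\LPspace{d/(d-1)}{\Omega}$; for $p < d/(d-1)$ the compact embedding makes this last step immediate. The remaining ingredients — reflexivity of $\LPspace{p}{\Omega}$, finite-dimensionality of $\mathcal{P}^1(\Omega)$, and continuity of $P$ on $\LPspace{p}{\Omega}$ (needed so that $Pu^n \wrightarrow Pu$) — are routine.
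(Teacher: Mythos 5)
Your proof is correct and follows essentially the same route as the paper's: a contradiction argument with a normalized sequence, boundedness in $\BV(\Omega)$ via Theorem~\ref{thm:tgv_bg_equiv}, compact embedding into $\LPspace{1}{\Omega}$, lower semi-continuity to identify the limit as an affine function in $\ker P$ (hence zero), and then $\norm[\BV]{u^n} \to 0$ combined with the continuous embedding $\BV(\Omega) \embed \LPspace{d/(d-1)}{\Omega}$ to contradict $\norm[p]{u^n} = 1$. Your explicit remarks on the borderline case $p = d/(d-1)$ and on the weak continuity of $P$ only make explicit what the paper leaves implicit.
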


\begin{proof}
  If this is not true, there exists a 
  sequence $\seq{u^n}$ in $\ker P$ with 
  $\norm[p]{u^n} = 1$ such that 
  $1 \geq  C(n) \TGV_\alpha^2(u^n)$ where $C(n) \geq n$. 
  We may assume $u^n \wrightarrow u$ in $\LPspace{p}{\Omega}$ 
  with $u \in \ker P$. According to Theorem~\ref{thm:tgv_bg_equiv}, 
  $\seq{u^n}$ is also bounded in $\BV(\Omega)$, thus we also have, 
  by compact embedding, that $\lim_{n \rightarrow \infty} u^n = u$ 
  in $\LPspace{1}{\Omega}$. Lower semi-continuity now implies 
  $\TGV_\alpha^2(u) \leq \liminf_{n\rightarrow \infty} \TGV_\alpha^2(u^n) = 0$, hence 
  $u \in \mathcal{P}^1(\Omega) \cap \ker P$ (see Theorem~\ref{thm:tgv_prop})
  and, consequently, $u = 0$. Thus, $u^n \rightarrow 0$ in 
  $\BV(\Omega)$ and by continuous embedding, also in $\LPspace{p}{\Omega}$,
  which is a contradiction to $\norm[p]{u^n} = 1$ for all $n$.
\end{proof}

\begin{theorem}
  \label{thm:tgv_tikhonov}
  Let $1 < p < \infty$, $p \leq d/(d-1)$, 
  $Y$ be a Hilbert space, $K \in
  \linspace{\LPspace{p}{\Omega}}{Y}$ a linear and continuous 
  operator which is injective on $\mathcal{P}^1(\Omega)$ and $f \in Y$.
  Then, the problem
  \begin{equation}
    \label{eq:tgv_tikhonov}
    \min_{u \in \LPspace{p}{\Omega}} \ \tfrac12\norm{Ku - f}^2 + \TGV_\alpha^2(u)
  \end{equation}
  admits a solution.
\end{theorem}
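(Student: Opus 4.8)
The plan is to run the direct method of the calculus of variations. The only nontrivial input is coercivity of the objective; once a minimizing sequence is known to be bounded in $\LPspace{p}{\Omega}$, weak compactness together with lower semicontinuity finishes the proof.

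First I would observe that $u=0$ is admissible with finite objective value $\tfrac12\norm{f}^2$, so the infimum $m$ in~\eqref{eq:tgv_tikhonov} is finite and a minimizing sequence $\seq{u^n}$ exists. Fix a continuous linear projection $P$ onto $\mathcal{P}^1(\Omega)$ and split $u^n = p^n + v^n$ with $p^n = Pu^n \in \mathcal{P}^1(\Omega)$ and $v^n = u^n - Pu^n \in \ker P$. Since $\TGV_\alpha^2$ annihilates affine functions (Theorem~\ref{thm:tgv_prop}) we have $\TGV_\alpha^2(v^n) = \TGV_\alpha^2(u^n)$, which is bounded along the minimizing sequence; hence Proposition~\ref{prop:tgv_poincare} bounds $\norm[p]{v^n}$, and therefore $\seq{Kv^n}$ is bounded in $Y$ by continuity of $K$. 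As $\norm{Ku^n - f}$ is also bounded, $\seq{Kp^n} = \seq{Ku^n - Kv^n}$ is bounded in $Y$. Now $K$ restricted to the finite-dimensional space $\mathcal{P}^1(\Omega)$ is injective, hence bounded below there (all norms on a finite-dimensional space being equivalent), so $\seq{p^n}$ is bounded in $\mathcal{P}^1(\Omega)$ and thus in $\LPspace{p}{\Omega}$. Adding up, $\seq{u^n}$ is bounded in $\LPspace{p}{\Omega}$.

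Because $1<p<\infty$ makes $\LPspace{p}{\Omega}$ reflexive, I would pass to a subsequence with $u^n \wrightarrow u^\ast$ in $\LPspace{p}{\Omega}$ for some $u^\ast \in \LPspace{p}{\Omega}$. To conclude that $u^\ast$ is optimal I would invoke weak lower semicontinuity of the objective: $\TGV_\alpha^2$ is convex and $\LPspace{p}$-lower semicontinuous by Theorem~\ref{thm:tgv_prop}, hence weakly lower semicontinuous, and $u\mapsto \tfrac12\norm{Ku-f}^2$ is convex and continuous (as $K$ is linear and continuous), hence also weakly lower semicontinuous. Therefore
\[
\tfrac12\norm{Ku^\ast-f}^2 + \TGV_\alpha^2(u^\ast) \le \liminf_{n\rightarrow\infty} \Bigl( \tfrac12\norm{Ku^n-f}^2 + \TGV_\alpha^2(u^n) \Bigr) = m,
\]
so $u^\ast$ attains the minimum.

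The hard part is the coercivity step. The objective controls $\TGV_\alpha^2(u)$ and the residual $\norm{Ku-f}$, but not directly any $\LPspace{p}$-norm, precisely because $\TGV_\alpha^2$ has the nontrivial finite-dimensional kernel $\mathcal{P}^1(\Omega)$. The decomposition along $P$ isolates this kernel: the Poincar\'e--Wirtinger estimate of Proposition~\ref{prop:tgv_poincare} handles the part in $\ker P$, while injectivity of $K$ on $\mathcal{P}^1(\Omega)$, together with equivalence of norms on that finite-dimensional space, handles the affine part. Everything after coercivity is the routine reflexivity and lower-semicontinuity argument.
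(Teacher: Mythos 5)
Your proof is correct and follows essentially the same route as the paper: decompose the minimizing sequence via a projection onto $\mathcal{P}^1(\Omega)$, bound the kernel part with Proposition~\ref{prop:tgv_poincare}, bound the affine part using injectivity of $K$ on the finite-dimensional space $\mathcal{P}^1(\Omega)$, and conclude by reflexivity and weak lower semicontinuity. The only cosmetic difference is that you bound $Kp^n$ via boundedness of $Ku^n$ and $Kv^n$ separately, whereas the paper uses a triangle inequality involving $f$; these are equivalent.
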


\begin{proof}
  Choose $P$ according to
  Proposition~\ref{prop:tgv_poincare} (such a $P$ exists since 
  $\mathcal{P}^1(\Omega)$ is finite-dimensional) as well as a
  minimizing sequence $\seq{u^n}$. By $\TGV_\alpha^2(u) = 
  \TGV_\alpha^2(u -Pu)$ (since $\TGV_\alpha^2(Pu) = 0$, see 
  Theorem~\ref{thm:tgv_prop}) and~\eqref{eq:tgv_poincare}, 
  $\seq{u^n - Pu^n}$ is bounded in $\LPspace{p}{\Omega}$.
  Moreover, $\seq{\norm{Ku^n - f}}$ is bounded.
  Since $K$ is injective on the finite-dimensional space
  $\mathcal{P}^1(\Omega)$, there is a $C_1 > 0$
  such that $\norm[p]{Pu} \leq C_1 \norm{KPu}$, hence
  \begin{multline*}
    \norm[p]{Pu^n} \leq C_1 \norm{KPu^n} \\ 
    \leq C_1 \bigl( \norm{Ku^n - f} + 
    \norm{K(u^n - Pu^n) - f} \bigr) \leq C_2,
  \end{multline*}
  for some $C_2 > 0$
  implying that $\seq{u^n}$ is bounded in $\LPspace{p}{\Omega}$.
  Thus, there exists a weakly convergent subsequence with limit $u^*$ 
  which can be seen to be a minimizer by weak lower semi-continuity (also
  confer Theorem~\ref{thm:tgv_prop}).
\end{proof}

\begin{theorem}
  \label{thm:tgv_tikh_stab}
  In the situation of Theorem~\ref{thm:tgv_tikhonov}, let $\seq{f^n}$
  be a sequence in $Y$ with $\lim_{n\rightarrow \infty} f^n = f$.
  Then each sequence $\seq{u^n}$ of minimizers $u^n$ 
  of~\eqref{eq:tgv_tikhonov} with data $f^n$ is relatively 
  weakly compact in $\LPspace{p}{\Omega}$ and each weak accumulation
  point $u^* = \lim_{k \rightarrow \infty} u^{n_k}$ 
  minimizes~\eqref{eq:tgv_tikhonov} with data $f$ with
  $\lim_{k \rightarrow \infty} = \TGV_\alpha^2(u^{n_k}) = \TGV_\alpha^2(u^*)$.
\end{theorem}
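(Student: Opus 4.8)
The plan is to run the standard lower-semicontinuity argument for Tikhonov regularization, with the extra care needed because the data $f^n$ varies with $n$. First I would produce a uniform bound on $\seq{u^n}$ in $\LPspace{p}{\Omega}$. Testing the minimality of $u^n$ against the admissible competitor $0$ (noting $\TGV_\alpha^2(0)=0$) gives $\tfrac12\norm{Ku^n-f^n}^2+\TGV_\alpha^2(u^n)\le\tfrac12\norm{f^n}^2$; since $f^n\to f$ the right-hand side is bounded, so $\seq{\norm{Ku^n-f^n}}$ and $\seq{\TGV_\alpha^2(u^n)}$ are bounded, hence so is $\seq{Ku^n}$. Then I would repeat the decomposition step from the proof of Theorem~\ref{thm:tgv_tikhonov}: choosing the projection $P$ from Proposition~\ref{prop:tgv_poincare}, using $\TGV_\alpha^2(u^n-Pu^n)=\TGV_\alpha^2(u^n)$ together with~\eqref{eq:tgv_poincare} to bound $\norm[p]{u^n-Pu^n}$, and injectivity of $K$ on $\mathcal{P}^1(\Omega)$ to bound $\norm[p]{Pu^n}\le C\norm{KPu^n}\le C\bigl(\norm{Ku^n}+\norm{K}\,\norm[p]{u^n-Pu^n}\bigr)$; together these give a uniform bound on $\seq{u^n}$ in $\LPspace{p}{\Omega}$.

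Since $1<p<\infty$, $\LPspace{p}{\Omega}$ is reflexive, so a bounded sequence is relatively weakly sequentially compact; this already proves the first assertion. Now let $u^{n_k}\wrightarrow u^*$ be any weakly convergent subsequence. As $K$ is linear and continuous it is weak-to-weak continuous, so $Ku^{n_k}\wrightarrow Ku^*$ in $Y$, and combined with $f^{n_k}\to f$ strongly, $Ku^{n_k}-f^{n_k}\wrightarrow Ku^*-f$; weak lower semicontinuity of the Hilbert-space norm then yields $\norm{Ku^*-f}^2\le\liminf_k\norm{Ku^{n_k}-f^{n_k}}^2$. Since $\TGV_\alpha^2$ is convex and lower semicontinuous on $\LPspace{p}{\Omega}$ by Theorem~\ref{thm:tgv_prop}, it is weakly lower semicontinuous, so $\TGV_\alpha^2(u^*)\le\liminf_k\TGV_\alpha^2(u^{n_k})$.

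To identify $u^*$ as a minimizer of~\eqref{eq:tgv_tikhonov} with data $f$, I would chain these estimates with the minimality of $u^{n_k}$ for data $f^{n_k}$: for every $v\in\LPspace{p}{\Omega}$,
\begin{multline*}
  \tfrac12\norm{Ku^*-f}^2+\TGV_\alpha^2(u^*)
  \le\liminf_k\Bigl(\tfrac12\norm{Ku^{n_k}-f^{n_k}}^2+\TGV_\alpha^2(u^{n_k})\Bigr)\\
  \le\limsup_k\Bigl(\tfrac12\norm{Kv-f^{n_k}}^2+\TGV_\alpha^2(v)\Bigr)
  =\tfrac12\norm{Kv-f}^2+\TGV_\alpha^2(v),
\end{multline*}
the last equality using $f^{n_k}\to f$. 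Hence $u^*$ minimizes the limit problem. Taking $v=u^*$ forces all inequalities above to be equalities, so the summed functionals converge, i.e. $\tfrac12\norm{Ku^{n_k}-f^{n_k}}^2+\TGV_\alpha^2(u^{n_k})\to\tfrac12\norm{Ku^*-f}^2+\TGV_\alpha^2(u^*)$. Writing $a_k=\tfrac12\norm{Ku^{n_k}-f^{n_k}}^2$, $b_k=\TGV_\alpha^2(u^{n_k})$, $a=\tfrac12\norm{Ku^*-f}^2$, $b=\TGV_\alpha^2(u^*)$, we have $a_k+b_k\to a+b$ with $a\le\liminf_k a_k$ and $b\le\liminf_k b_k$; therefore $\limsup_k b_k\le\lim_k(a_k+b_k)-\liminf_k a_k\le(a+b)-a=b$, which together with $b\le\liminf_k b_k$ gives $\lim_k\TGV_\alpha^2(u^{n_k})=\TGV_\alpha^2(u^*)$.

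I do not expect a genuine obstacle: coercivity is already available through Proposition~\ref{prop:tgv_poincare} and the argument of Theorem~\ref{thm:tgv_tikhonov}, weak lower semicontinuity of $\TGV_\alpha^2$ from Theorem~\ref{thm:tgv_prop}, and weak continuity of $K$ from linearity and boundedness. The only points needing attention are that the fidelity and minimality estimates must be stated for the perturbed functionals and passed to the limit using $f^n\to f$, and the final $\liminf$/$\limsup$ bookkeeping that extracts convergence of the individual $\TGV_\alpha^2$-values from convergence of the sum.
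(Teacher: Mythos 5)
Your proof is correct and follows essentially the same route as the paper: uniform boundedness via the coercivity machinery of Theorem~\ref{thm:tgv_tikhonov}, weak lower semicontinuity of the fidelity term and of $\TGV_\alpha^2$, the $\liminf$/$\limsup$ chain against arbitrary competitors to identify $u^*$ as a minimizer, and convergence of the sum at $v=u^*$ to extract convergence of the $\TGV_\alpha^2$-values. The only cosmetic difference is that you obtain the uniform bound by testing against the competitor $0$ and re-running the decomposition explicitly, whereas the paper bounds $F(u^n)$ via $\norm{a+b}^2\le 2\norm{a}^2+2\norm{b}^2$ and then invokes the arguments of Theorem~\ref{thm:tgv_tikhonov}.
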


\begin{proof}
  The proof follows the lines of \cite{hofmann2007convergencerates}. Denote by $F_n$ and $F$ the 
  functional to minimize in~\eqref{eq:tgv_tikhonov} with data $f^n$ and $f$, 
  respectively. Then, for any $u \in \BV(\Omega)$, $F_n(u^n) 
  \leq F_n(u)$, hence, using that $\norm{a+b}^2 \leq 2 \norm{a}^2 + 
  2\norm{b}^2$ for $a,b\in Y$,
  \begin{multline*}
    F(u^n) \leq \norm{Ku^n - f^n}^2 + \TGV_\alpha^2(u^n) + \norm{f^n - f}^2 \\
    \leq 2F_n(u^n) + \norm{f^n - f}^2 \leq 2F_n(u) + \norm{f^n - f}^2.
  \end{multline*}
  This shows that $\seq{F(u^n)}$ is bounded and by the same arguments as
  in Theorem~\ref{thm:tgv_tikhonov}, it follows that $\seq{u^n}$ is 
  bounded in $\LPspace{p}{\Omega}$ and therefore relatively weakly compact.
  Now, let $u^*$ be a weak accumulation point, i.e., $u^{n_k} \wrightarrow 
  u^*$. It follows by weak lower semi-continuity that $\frac12 
  \norm{Ku^* - f}^2 \leq \liminf_{k \rightarrow \infty} \frac12 \norm{Ku^{n_k} - 
    f^{n_k}}^2$
  and $\TGV_\alpha^2(u^*) \leq \liminf_{k \rightarrow \infty} \TGV_\alpha^2(u^{n_k})$.
  Hence,
  \begin{align*}
    F(u^*) &\leq \liminf_{k \rightarrow \infty} F_{n_k}(u^{n_k}) 
    \leq 
    \limsup_{k \rightarrow \infty} F_{n_k}(u^{n_k}) \\
    &\leq
    \lim_{k \rightarrow \infty} F_{n_k}(u) = F(u)
  \end{align*}
  for each $u \in \BV(\Omega)$,
  showing that $u^*$ is a minimizer. In particular, plugging in $u^*$
  leads to $\lim_{k \rightarrow \infty} F_{n_k}(u^{n_k}) = F(u^*)$. Now, 
  $\limsup_{k \rightarrow \infty} \TGV_\alpha^2(u^{n_k}) > \TGV_\alpha^2(u^*)$
  would contradict this, hence $\TGV_\alpha^2(u^{n_k}) \rightarrow 
  \TGV_\alpha^2(u^*)$.
\end{proof}

\section{Application to deconvolution}

Let $d = 2$ and $\Omega \subset \RR^2$ be a bounded Lipschitz domain.
Pick a blurring kernel $k \in L^1(\Omega_0)$ satisfying 
$\bar k = \int_{\Omega_0} k \dd{x} \neq 0$. Moreover, let 
$\Omega' \subset \RR^2$
be a domain with $\Omega' - \Omega_0 \subset \Omega$.
Then
\[
(Ku)(x) = \int_{\Omega_0} u(x-y)k(y) \dd{y}, \quad x \in \Omega'
\]
fulfills $K \in \linspace{\LPspace{2}{\Omega}}{\LPspace{2}{\Omega'}}$.
If $m$ denotes the vector with components
$m_i = \int_{\Omega_0} y_i k(y) \dd{y}$, then 
an affine function $u(x) = a\inprod x 
+ b$ with $Ku = 0$ satisfies
\[
\int_{\Omega_0} \bigl (a \inprod (x-y) + b \bigr) 
k(y) \dd{y} = a\bar k \inprod x + b\bar k  - a \inprod m = 0
\]
for all $x \in \Omega'$. Since this is a domain, $a \bar k = 0$ and 
$b \bar k - a \inprod m = 0$ which implies $a = 0$ and $b = 0$. Hence
$K$ is injective on $\mathcal{P}^1(\Omega)$.

If $f \in \LPspace{2}{\Omega'}$ is a noise-contaminated image blurred by
the convolution operator $K$, then according to 
Theorem~\ref{thm:tgv_tikhonov}, the $\TGV_\alpha^2$-based
Tikhonov regularization
\[
\min_{u \in \LPspace{2}{\Omega}} \ \frac12 \int_{\Omega'} 
\abs{(u \conv k)(x) - f(x)}^2 \dd{x} + \TGV_\alpha^2(u) 
\]
admits a solution which stably depends on $f$ 
(Theorem~\ref{thm:tgv_tikh_stab}).

This convex minimization problem can be put, for instance, 
in a saddle-point formulation and solved numerically by a 
primal-dual algorithm \cite{chambolle2010primaldual}. 
Figure~\ref{fig:tgv_deconv}
shows the effect of $\TGV_\alpha^2$ compared to $\TV$ for deblurring
a sample image.

\begin{figure}
  \centering
  \begin{tabular}{cc}
    \includegraphics[width=0.45\linewidth]{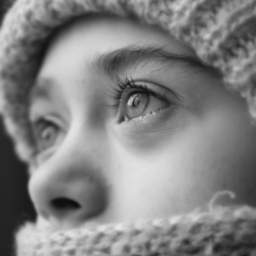} &
    \includegraphics[width=0.45\linewidth]{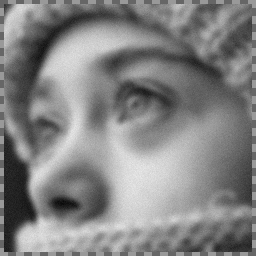} \\[-0.5ex]
    $u_{\mathrm{orig}}$ & $f$ \\[\smallskipamount]
    \includegraphics[width=0.45\linewidth]{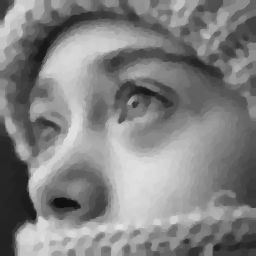} &
    \includegraphics[width=0.45\linewidth]{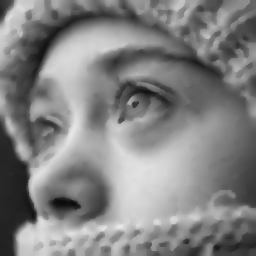} \\[-0.5ex]
    $u_{\TV}$ 
    & $u_{\TGV_\alpha^2}$ 
  \end{tabular}
  \caption{Deconvolution example. The original image $u_\mathrm{orig}$ 
    \cite{alina2009eye} has been blurred and contaminated by 
    noise resulting in $f$, $u_{\TV}$ 
    and $u_{\TGV_\alpha^2}$ are the regularized solutions recovered from $f$.}
  \label{fig:tgv_deconv}
\end{figure}

\bibliographystyle{plain}
\bibliography{myRef}

\begin{thebibliography}{10}

\bibitem{attouch1986duality}
H.~Attouch and H.~Brezis.
\newblock Duality for the sum of convex functions in general {B}anach spaces.
\newblock In {\em Aspects of Math. and its Appl.}, pages 125--133. Elsevier,
  1986.

\bibitem{bredies2010tgv}
K.~Bredies, K.~Kunisch, and T.~Pock.
\newblock {T}otal {G}eneralized {V}ariation.
\newblock {\em SIAM J. on Imaging Sciences}, 3(3):492--526, 2010.

\bibitem{chambolle1997tv}
A.~Chambolle and P.-L. Lions.
\newblock Image recovery via {T}otal {V}ariation minimization and related
  problems.
\newblock {\em Numerische Mathematik}, 76:167--188, 1997.

\bibitem{chambolle2010primaldual}
A.~Chambolle and T.~Pock.
\newblock A first-order primal-dual algorithm for convex problems with
  applications to imaging.
\newblock Technical report, Graz University of Technology, 2010.

\bibitem{chan2005fourthorder}
T.~F. Chan, S.~Esedoglu, and F.~E. Park.
\newblock A fourth order dual method for staircase reduction in texture
  extraction and image restoration problems.
\newblock Technical Report 05-28, University of California, LA, 2005.

\bibitem{hofmann2007convergencerates}
B.~Hofmann, B.~Kaltenbacher, C.~P{\"o}schl, and O.~Scherzer.
\newblock A convergence rates result for {T}ikhonov regularization in {B}anach
  spaces with non-smooth operators.
\newblock {\em Inverse Problems}, 23(3):987, 2007.

\bibitem{alina2009eye}
\^{}@\^{}ina@Flickr.
\newblock Alina's eye.
\newblock Licenced under CreativeCommons-by-2.0
  (http://creativecommons.org/licenses/by/2.0/), 2009.
\newblock http://www.flickr.com/photos/angel\_ina/3201337190/.

\bibitem{nikolova2000homogeneity}
M.~Nikolova.
\newblock Local strong homogeneity of a regularized estimator.
\newblock {\em SIAM J. on Appl. Math.}, 61(2):633--.658, 2000.

\bibitem{osher1992tv}
L.~I. Rudin, S.~Osher, and E.~Fatemi.
\newblock Nonlinear {T}otal {V}ariation based noise removal algorithms.
\newblock {\em Physica D}, 60(1--4):259--268, 1992.

\bibitem{temam85plasticity}
R.~Temam.
\newblock {\em Mathematical problems in plasticity}.
\newblock Gaulthier-Villars, 1985.

\bibitem{ring2000structuraltv}
{W. Ring}.
\newblock Structural properties of solutions to {T}otal {V}ariation
  regularization problems.
\newblock {\em Math. Modelling and Num.~Anal.}, 34(4):799--810, 2000.

\end{thebibliography}

\end{document}